\documentclass[11pt]{article}

\usepackage[margin=1in]{geometry}  
\usepackage{graphicx}              
\usepackage{amsmath}               
\usepackage{amssymb}               
\usepackage{amsfonts}              
\usepackage{amsthm}                
\usepackage{mathtools}
\usepackage{enumerate}
\usepackage{xparse}
\usepackage[utf8]{inputenc}
\usepackage{color}
\usepackage{authblk}
\usepackage{thm-restate}
\usepackage{hyperref}


\newtheorem{theorem}{Theorem}
\numberwithin{theorem}{section}
\newtheorem{lemma}[theorem]{Lemma}
\newtheorem{claim}[theorem]{Claim}

\newtheorem{conjecture}[theorem]{Conjecture}
\newtheorem{definition}[theorem]{Definition}



\DeclarePairedDelimiter\abs{\lvert}{\rvert}
\DeclarePairedDelimiter\norm{\lVert}{\rVert}
\DeclarePairedDelimiter\floor{\lfloor}{\rfloor}
\DeclarePairedDelimiter\ceil{\lceil}{\rceil}

\makeatletter
\let\oldnorm\norm
\def\norm{\@ifstar{\oldnorm}{\oldnorm*}}
\makeatother

\makeatletter
\let\oldfloor\floor
\def\floor{\@ifstar{\oldfloor}{\oldfloor*}}
\makeatother

\makeatletter
\let\oldceil\ceil
\def\ceil{\@ifstar{\oldceil}{\oldceil*}}
\makeatother

\usepackage{color}
\usepackage[usenames,dvipsnames]{xcolor}

\title{Smaller subgraphs of minimum degree k}

\author[ ]{Frank Mousset,
Andreas Noever, and
Nemanja \v{S}korić}
\affil[ ]{\small Department of Computer Science }
\affil[ ]{ETH Z\"urich, 8092 Z\"urich, Switzerland }
\affil[ ]{\tt {\{moussetf,anoever,nskoric\}@inf.ethz.ch}}
\begin{document}

\maketitle

\abstract{
In 1990 Erd\H{o}s, Faudree, Rousseau and Schelp proved that for $k\geq 2$, every
graph with $n\geq k+1$ vertices and $(k-1)(n-k+2)+\binom{k-2}{2}+1$ edges contains a
subgraph of minimum degree $k$ on at most $n-\sqrt{n}/\sqrt{6k^3}$ vertices.
They conjectured that it is possible to remove at least $\epsilon_k n$ many
vertices and remain with a subgraph of minimum degree $k$, for some
$\epsilon_k>0$. We make progress towards their conjecture by showing that one
can remove at least $\Omega(n/\log n)$ many vertices.
}

\section{Introduction}

It is easy to show that every graph on $n\geq 4$ vertices with at least $2n-2$
edges contains a subgraph of minimum degree $3$. More generally, any graph on
$n\geq k+1$ vertices with at least $t_k(n)\coloneqq
(k-1)(n-k+2)+\binom{k-2}{2}$ edges\footnote{Technically, we may relax the first
condition to $n\geq k-1$, since for $n\in \{k-1,k\}$ the condition on the
number of edges cannot be satisfied. However, there seems no point in doing so.
For $n=k-2$, the statement is wrong.} contains a subgraph of minimum degree $k$,
for all $k\geq 2$. This statement is best possible in two ways: (1) there exist
graphs on $n\geq k+1$ vertices with $t_k(n)-1$ edges which do not contain a
subgraph of minimum degree $k$, and (2) there exist graphs on $n$ vertices with
$t_k(n)$ edges without a subgraph of minimum degree $k$ on fewer than $n$ vertices.
For example the wheel $W(1,n)=K_1+C_{n-1}$ (where $+$ denotes the graph join operation)  has exactly $2n-2$ edges and minimum
degree $3$, but contains no proper induced subgraph with minimum degree $3$. A
similar construction is available for all $k$ (consider the generalized wheel
$W(k-2,n) = K_{k-2}+C_{n-k+2}$).

Erdős conjectured that the presence of even a single additional edge allows one
to find a much smaller subgraph:

\begin{conjecture}[Erd\H{o}s~\cite{FavProblems, ERDOS199053}]
  \label{conj:main}
  For every $k\geq 2$ there exists an $\epsilon_k> 0$ such that every graph on
  $n\geq k+1$ vertices and $t_k(n)+1$ edges contains a subgraph of minimum
  degree $k$ with at most $(1-\epsilon_k)n$ vertices.
\end{conjecture}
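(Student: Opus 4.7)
Let $H$ be the $k$-core of $G$ and $n'=|V(H)|$. The standard peeling process removes at most $k-1$ edges per deleted vertex, so
\[ e(H) \geq e(G) - (k-1)(n-n') = t_k(n')+1. \]
If $n' \leq (1-\epsilon_k)n$ we are already done, since $H$ itself has minimum degree $\geq k$. Otherwise $n' \geq (1-\epsilon_k)n$, and it suffices to prove the following structural surgery lemma: any graph $H$ with minimum degree $\geq k$ and $e(H) \geq t_k(n')+1$ admits a proper subgraph of minimum degree $\geq k$ on at most $(1-\delta_k)n'$ vertices. Choosing $\epsilon_k := \delta_k$ then closes the argument, with no iteration needed.

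\textbf{Degree-sum accounting.} From $2e(H) \geq 2t_k(n')+2$ one obtains $\sum_v (\deg_H(v)-k) \geq (k-2)n' - O_k(1) + 2$, so for $k\geq 3$ a positive linear fraction of vertices have excess degree, equivalently a linear number of half-edges are ``locally removable'' without immediately violating the min-degree-$k$ condition at their endpoints. For $k=2$ the two surplus half-edges above a spanning tree produce two cycles sharing at most a path, and the classical chord/shared-path argument delivers $\delta_2 \geq 1/3$ directly, so the focus is on $k\geq 3$.

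\textbf{Surgery plan.} Partition $V(H) = V_k \cup V_{>k}$ according to whether the degree is exactly $k$ or strictly larger. Guided by the extremal graph $K_{k-2}+C_{n-k+2}$ — in which $V_{>k}$ is the clique and $V_k$ is a single long cycle attached to it — one expects the induced subgraph on $V_k$ to consist of paths and cycles pendant on $V_{>k}$. An edge in excess of $t_k(n')$ then creates either a chord inside a long $V_k$-path (producing two ``ears'', either of which together with $V_{>k}$ is a subgraph of minimum degree $\geq k$) or a short shortcut through $V_{>k}$ (producing a balanced vertex separator). In either case, excising the larger side yields a proper min-degree-$k$ subgraph on at most $(1-\delta_k)n'$ vertices, where $\delta_k$ depends on a tolerated ``spine width'' for $V_{>k}$.

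\textbf{Main obstacle.} The crux is making the surgery quantitative at the $\Omega(n')$ scale demanded by the conjecture. The natural implementation — delete an edge and let the $k$-core contract by a peeling cascade — behaves like a branching process and removes only $\Theta(\log n')$ vertices in the worst case (e.g., nearly $k$-regular, high-girth $H$), which is precisely where the $\Omega(n/\log n)$ barrier of the paper resides. To achieve a linear bound one must either (i) exhibit $\Omega(n')$ essentially disjoint short cascades and run them in parallel, one for each surplus half-edge, or (ii) construct a single linear-sized peelable region by an absorption/rotation argument in the spirit of Pósa. Route (i) seems more tractable, but it demands structural control over how $V_k$-paths attach to $V_{>k}$ that does not follow from the edge count alone; finding such a handle is, in my view, the substantive combinatorial input that would be required to upgrade the current $\Omega(n/\log n)$ bound to the full $\Omega(n)$.
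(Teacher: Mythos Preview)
The statement you are addressing is Conjecture~\ref{conj:main}, which the paper does \emph{not} prove; it remains open. The paper establishes only the weaker Theorem~\ref{thm:main}, removing $\Omega(n/\log n)$ vertices rather than $\Omega(n)$.

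Your proposal is not a proof either, and you say so explicitly. The ``Main obstacle'' paragraph is an honest admission that the surgery you describe --- delete a surplus edge and let the $k$-core peel --- removes only $\Theta(\log n')$ vertices in the high-girth, nearly $k$-regular case, and that closing the gap would require a genuinely new idea (either $\Omega(n')$ essentially disjoint cascades run in parallel, or a P\'osa-style absorption producing a single linear peelable region). Everything preceding that paragraph --- the reduction to the $k$-core, the degree-sum accounting giving $(k-2)n' - O_k(1)$ total excess for $k\ge 3$, the heuristic picture based on the extremal graph $K_{k-2}+C_{n-k+2}$ --- is correct orientation, but none of it is a step the authors of the paper would not already have been aware of. Your treatment of $k=2$ is fine; that case is indeed elementary.

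So there is nothing to compare against: the paper contains no proof of this conjecture, and your write-up is a diagnosis of why the natural approaches stall at the same logarithmic barrier that the paper's method (maximal good sets, bucketed by size and thinned via a conflict graph) also cannot cross. Your identification of the obstruction is accurate; what is missing is precisely the ``substantive combinatorial input'' you name in your last sentence.
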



%

As far as we know, the only progress on this conjecture is the following theorem
due to Erdős, Faudree, Rousseau and Schelp from 1990:
\begin{theorem}[Erd\H{o}s, Faudree, Rousseau, Schelp~\cite{ERDOS199053}]\label{thm:sqrtn}
  For $k\geq 2$, let $G$ be a graph on $n\geq k+1$ vertices and $t_k(n)+1$
  edges. Then $G$ contains a subgraph of order at most
  $n-\floor*{\sqrt{n/6k^3}}$ and minimum degree at least $k$.
\end{theorem}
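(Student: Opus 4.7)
The approach I would take is a reduction to a well-structured graph followed by a contradiction argument at a minimum counterexample.

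First, I would apply the standard $k$-core reduction: iteratively delete vertices of degree less than $k$ from $G$. Each such deletion removes at most $k-1$ edges while the threshold $t_k$ decreases by exactly $k-1$, so the surplus $e-t_k\geq 1$ is preserved throughout. If this process deletes at least $s:=\floor{\sqrt{n/(6k^3)}}$ vertices, we have already found a min-deg-$k$ subgraph on at most $n-s$ vertices. Otherwise, we may henceforth assume that $G$ itself has $n$ vertices, minimum degree at least $k$, and exactly $t_k(n)+1$ edges.

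Next, let $H\subseteq G$ be a min-deg-$k$ subgraph with the smallest possible vertex count $h$, and suppose for contradiction $h>n-s$, so $D:=n-h<s$. The minimality of $H$ forces two structural properties. First, for every $v\in H$ the graph $H-v$ contains no min-deg-$k$ subgraph, so some vertex $u\in H-v$ has $\deg_{H-v}(u)<k$; combined with $\deg_H(u)\geq k$ this forces $u\in N_H(v)$ and $\deg_H(u)=k$. Hence $L:=\{u\in H:\deg_H(u)=k\}$ dominates $H$. Second, picking any $u\in L$ and applying the folklore $t_k$-threshold theorem (stated in the introduction) to $H-u$ yields $e(H-u)\leq t_k(h-1)-1$; adding back the $k$ edges at $u$ gives $e(H)\leq t_k(h)$. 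Together with $e(G)=t_k(n)+1$, the boundary edges satisfy
\[
e(G[V\setminus H])+e(H,V\setminus H)\;=\;e(G)-e(H)\;\geq\;(k-1)D+1.
\]

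To reach a contradiction I would establish a matching \emph{upper} bound on the boundary edges. A crucial additional input is that $G[V\setminus H]$ itself contains no min-deg-$k$ subgraph---otherwise that subgraph has at most $D<s$ vertices and already proves the theorem. Applying the folklore threshold theorem to $G[V\setminus H]$ thus gives $e(G[V\setminus H])\leq t_k(D)-1$. For the cross-edges $e(H,V\setminus H)$, I would exploit the total $G$-slack $\sum_v(\deg_G(v)-k)=(k-2)(n-k-1)+2$ together with the domination of $H$ by $L$: each cross-edge incident to $L$ consumes one unit of $G$-slack at an $L$-vertex, and cross-edges into $H\setminus L$ must be absorbed by the excess degrees in $H\setminus L$. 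Careful bookkeeping of these competing quantities should produce a quadratic inequality in $D$ which, after tracking the $k$-factors through the chain, yields $6k^3 D^2\geq n$, contradicting $D<s$.

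The main obstacle is this final counting step. Neither the threshold bound on $e(G[V\setminus H])$ nor the total slack budget is individually sufficient: the naive estimates on each separately give only $O(Dn)$, which is far too weak. The difficulty lies in combining the threshold bound, the slack budget, and the dominating-set structure of $L$ simultaneously, and the explicit constant $6k^3$ in the square-root bound presumably reflects how these three ingredients interact through the counting.
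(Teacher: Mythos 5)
Your preliminary reductions are correct and standard: the $k$-core peeling preserves the surplus $e-t_k$, minimality of $H$ forces every vertex of $H$ to have a neighbour of degree exactly $k$ in $H$, deleting such a neighbour gives $e(H)\le t_k(h)$, and $G[V\setminus H]$ can indeed be assumed to have no subgraph of minimum degree $k$ and hence at most $t_k(D)-1$ edges. But the step you yourself flag as ``the main obstacle'' is not deferred bookkeeping --- it is the entire content of the theorem, and the quantities you propose to combine cannot yield $6k^3D^2\ge n$. Subtracting $e(H)\le t_k(h)$ and $e(G[V\setminus H])\le t_k(D)-1$ from $e(G)=t_k(n)+1$ gives only $e(H,V\setminus H)\ge \tfrac{(k-2)(k+1)}{2}+2$, a constant independent of $D$ and $n$; meanwhile the slack available inside $H$ alone is $2e(H)-kh\le 2t_k(h)-kh=(k-2)(h-k-1)$, which is of order $n$ for $k\ge 3$. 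A constant demand against a linear budget produces no tension, hence no quadratic inequality in $D$. Indeed, every constraint you list --- $L$ dominating $H$, $e(H)=t_k(h)$, $e(G[V\setminus H])\le t_k(D)-1$, minimum degree $k$ outside $H$, and the degree-sum identity --- is simultaneously consistent with $D=k+1$ (take $H$ a generalized wheel on $n-k-1$ vertices and attach $K_{k+1}$ minus an edge by roughly $\binom{k}{2}$ cross-edges). What actually rules out small $D$ is that the cross-edges together with $G[V\setminus H]$ and a small piece of $H$ would assemble into a \emph{different} small subgraph of minimum degree $k$ (exactly as a chord of the wheel $W(1,n)$ creates a $K_4$); this global phenomenon is invisible to the aggregate edge and slack counts you track.

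The argument of Erd\H{o}s, Faudree, Rousseau and Schelp, as sketched in Section~2, is of a different nature. One first handles the case of fewer than $n/(2k)$ vertices of degree exactly $k$ via \autoref{lemma:fewDegreeK}. Otherwise the degree-$k$ vertices are aggregated into \emph{good sets} (Definition~\ref{def:good}): vertex sets $C$ meeting at most $(k-1)|C|+1$ edges, so that deleting $C$ preserves the surplus over $t_k$ (Claim~\ref{claim:good}), and whose maximality guarantees that every remaining vertex keeps degree at least $k$. The $\sqrt{n}$ then arises from a win--win dichotomy rather than from a contradiction at a minimal counterexample: either some good set has size at least $\sqrt{n}$ and is deleted in one step, or all good sets are smaller than $\sqrt{n}$ and one can iteratively delete $\Omega(\sqrt{n}/k^{O(1)})$ disjoint ones. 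If you wish to salvage your approach, the missing idea is precisely this aggregation of degree-$k$ vertices into removable blocks and the size-versus-number trade-off; a single globally minimal subgraph $H$ does not carry enough structure to force $D$ to grow with $n$.
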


Here, we will show that it is possible to replace the $\sqrt{n/6k^3}$ by
$\Omega(n/\log n)$:
\begin{theorem}\label{thm:main}
  For $k\geq 2$, let $G$ be a graph on $n\geq k+1$ vertices and $t_k(n)+1$ edges. Then
  $G$ contains a subgraph of order at most $n-n/(4(k+1)^5 \log_2 n)$ and minimum
  degree at least $k$.
\end{theorem}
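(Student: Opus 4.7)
My plan is iterative. I would shrink $G$ step by step, maintaining the invariant that the current subgraph $H$ satisfies $\delta(H) \geq k$ and $e(H) \geq t_k(\abs{V(H)}) + 1$, stopping once at least $n/(4(k+1)^5\log_2 n)$ vertices have been removed from $V(G)$.

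First, I would apply standard $k$-core peeling to $G$: iteratively delete any vertex of degree strictly less than $k$. Since peeling a vertex of degree at most $k-1$ removes at most $k-1$ edges and the threshold $t_k$ also drops by $k-1$ per removed vertex, the surplus $e(H) - t_k(\abs{V(H)})$ is non-decreasing, so the $k$-core $H_0$ still satisfies the hypothesis of Theorem~\ref{thm:main}. If peeling alone removes at least $n/(4(k+1)^5\log_2 n)$ vertices, we are done; otherwise I may assume $\delta(G) \geq k$.

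The heart of the proof should be a lemma of the following form: for every graph $H$ with $\delta(H) \geq k$ and $e(H) \geq t_k(\abs{V(H)}) + 1$, there exists a vertex $v \in V(H)$ such that the graph obtained from $H-v$ by cascade peeling still satisfies the same hypothesis and is strictly smaller than $H$. The surplus arithmetic is the key: removing a degree-$k$ vertex costs one unit of surplus, whereas cascade-peeling a vertex of degree $d < k$ changes the surplus by $(k-1)-d \geq 0$, strictly positively whenever $d \leq k-2$. The task is therefore to locate a $v$ whose cascade is \emph{productive}, meaning that at the moment of its peeling some cascaded vertex has degree at most $k-2$, which refunds the unit of surplus consumed by the initial deletion and allows the iteration to continue.

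The main obstacle, and the step on which I would spend the most effort, is proving this lemma. I would attempt a BFS-style argument on the subgraph induced by vertices of degree exactly $k$: a cascade triggered at $v$ propagates through neighbouring degree-$k$ vertices and must terminate either at a vertex of degree $\geq k+1$, which absorbs the cascade without further peeling, or by two branches meeting, which forces a productive step. Since the branching factor along the cascade is at most $k$, the cascade reaches depth $O(\log_k n)$, which accounts for the $\log_2 n$ factor in the final bound; the polynomial $(k+1)^5$ should absorb constants from the case analysis needed to show that \emph{some} starting $v$ actually yields a productive cascade. The hardest subcase will be when $H$ is close to the extremal construction $K_{k-2} + C_{n-k+2}$ with a single chord added, where productive cascades exist but are concentrated near the chord's endpoints; for this case I expect a careful amortised argument will be necessary to keep re-finding a productive $v$ across successive rounds of the iteration.
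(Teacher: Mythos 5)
Your opening reductions (peeling vertices of degree below $k$ and the surplus bookkeeping $e(H)-t_k(v_H)$) match the paper's first steps, and your ``cascade'' started at a vertex is essentially the paper's notion of a good set (Definition~\ref{def:good}). But the key lemma you propose is false, and the counterexample is exactly the extremal graph you flag as the hard case. Take $H=K_{k-2}+C_{n-k+2}$ plus one chord of the cycle, which has $t_k(n)+1$ edges and minimum degree $k$. Deleting any degree-$k$ cycle vertex triggers a cascade that propagates along its arc of the cycle; every cascaded vertex has degree exactly $k-1$ at the moment it is peeled (it has lost precisely one cycle neighbour), and both branches terminate at the chord's endpoints, which have degree $k+1$ and absorb the cascade. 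No vertex of degree at most $k-2$ is ever peeled, so the surplus is not refunded: the result is $K_{k-2}+C_{s}$ with surplus exactly $0$, and the iteration dies. Deleting a vertex of degree $\geq k+1$ only makes the surplus more negative, and one checks that those cascades either again land on an exact extremal graph or unravel everything. So there is \emph{no} productive starting vertex in this graph; the ``two branches meeting'' mechanism you rely on simply does not occur. What saves this example is a different mechanism: one of the two cascades is \emph{long} and removes $\Omega(n)$ vertices in a single step, so you win without ever continuing the iteration. Your proposal has no way to detect or exploit this, and no way to proceed when every cascade is short and unproductive.

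This dichotomy --- either some good set (cascade) is large, in which case removing it once suffices, or all good sets are small, in which case there must be many of them --- is the actual structure of the paper's proof, and the second horn is where all the work lies: one fixes a dyadic size class of maximal good sets covering $\Omega(n/\log n)$ vertices (Claim~\ref{claim:collection}; this pigeonhole over $\log_2 n$ size classes is the true source of the $\log n$ loss, not cascade depth --- note that a cascade along a cycle has depth $\Theta(n)$, not $O(\log_k n)$, so your branching-factor argument is also incorrect), and then shows via a separate covering lemma (Lemma~\ref{lemma:cover}) plus Tur\'an's theorem that a constant fraction of these good sets can be deleted \emph{simultaneously} while the remainder retains enough edges to contain a subgraph of minimum degree $k$. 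The point is that the surplus argument is applied once to the union of many removed sets, not re-established after each single removal; your invariant $e\geq t_k+1$ after every step is simply too strong to maintain. To repair your approach you would need both (a) the large-cascade escape hatch and (b) an argument for removing many small cascades at once without restoring the surplus in between, at which point you have reconstructed the paper's proof.
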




We use standard graph theoretic notation. The vertex and edge sets of a graph
$G$ are denoted by $V(G)$ and $E(G)$. We write $v_G$ for the number of vertices
and $e_G$ for the number of edges in $G$. $V_i(G)$ denotes the set of vertices
of $G$ with degree exactly $i$. Similarly $V_{\leq i}(G)$ denotes the set of
vertices of degree at most $i$. For a vertex $v$, we denote its neighborhood by
$\Gamma_G(v)$ and its degree by $\deg_G(v)$ (we omit the subscript $G$ if it is
clear from the context). We write $E_G(A,B)$ (or $E(A,B)$) for the set of edges in $G$
with one endpoint in $A$ and another in $B$. The minimum degree of a graph $G$
is denoted by $\delta(G)$.

\section{Proof of Theorem~\ref{thm:main}}

The proof will use induction on the number of vertices of $G$. In the base
case $n=k+1$ it is easy to check that $t_k(n)+1 = \binom{k+1}{2}+1$, so the
theorem holds vacuously. Assume now that the theorem holds for all graphs $G$
on $k+1\leq n'<n$ vertices.

If $G$ contains a vertex $v$ with $\deg(v)\leq k-1$, then $G-v$ has at least
$t_k(n)+1- (k-1)\geq t_k(n-1)+1$ edges. Thus, by induction, $G-v$ (and
hence $G$) contains a subgraph with minimum degree $k$ and at most
\[ n-1 - \frac{n-1}{4(k+1)^5\log_2(n-1)} \leq n - \frac{n}{4(k+1)^5\log_2 n} \]
vertices, and we are done. From now on, we will assume that $G$
has minimum degree at least $k$.

The rest of the proof is split into two cases depending on the number of
vertices of degree exactly $k$. Already in the proof of \autoref{thm:sqrtn},
Erd\H{o}s, Faudree, Rousseau, and Schelp observed that
Conjecture~\ref{conj:main} holds if the number of vertices of degree $k$ is
not too large:
\begin{lemma}[Lemma 4 in \cite{ERDOS199053}]\label{lemma:fewDegreeK}
  For $k\geq 2 $, let $G$ be a graph on $n$ vertices and $t_k(n)+1$ edges with
  $\delta(G)\geq k$. If for some $0<\alpha <1/(2k)$, $G$ has at most
  $\alpha n$ vertices of degree $k$, then $G$ has a subgraph $H$ of order at
  most $n-(1-2\alpha k)n/(8k^2)$ with $\delta(H)\geq k$.
\end{lemma}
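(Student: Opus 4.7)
The plan is to find a set $S \subseteq V(G)$ of size at least $(1-2\alpha k)\,n/(8k^2)$ such that the induced subgraph $G-S$ still has minimum degree at least $k$; then $H := G-S$ is the required subgraph.

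\textbf{Setting up a safe region.} I would begin by restricting the candidate pool for $S$. Let $D := V_k(G)$ and $W := V(G) \setminus (D \cup N_G(D))$. Since every vertex of $D$ has exactly $k$ neighbors, $|D \cup N(D)| \leq (k+1)|D| \leq 2k\alpha n$ for $k\geq 1$, and hence $|W| \geq (1 - 2\alpha k)\,n$. Each $v \in W$ has $\deg_G(v) \geq k+1$ and is non-adjacent to every vertex of $D$, so deleting any subset of $W$ from $G$ leaves every vertex of $D$ of degree exactly $k$; this way the minimum-degree constraint can only be threatened at vertices outside $D$.

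\textbf{Greedy deletion of degree-$(k+1)$ safe vertices.} Set $G_0 := G$. At each step $i$, let $D_i := V_k(G_i)$ and $W_i := V(G_i)\setminus(D_i \cup N_{G_i}(D_i))$, and pick any $v_i \in W_i$ with $\deg_{G_i}(v_i) = k+1$; put $G_{i+1} := G_i - v_i$. Since $v_i$ has no neighbor in $D_i$, the degrees of $D_i$-vertices are unchanged and $\delta(G_{i+1}) \geq k$. The key estimate is that the only neighbors of $v_i$ whose degree can drop to $k$ after the deletion are those that had $G_i$-degree exactly $k+1$, of which there are at most $\deg_{G_i}(v_i) = k+1$; hence $|D_{i+1}| \leq |D_i| + (k+1)$ and, inductively, $|D_t| \leq \alpha n + (k+1)t$.

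\textbf{Main counting.} Suppose the process terminates at step $t$ because $W_t = \emptyset$. Then $V(G_t) \subseteq D_t \cup N_{G_t}(D_t)$, so
\[
n - t = |V(G_t)| \leq (k+1)|D_t| \leq (k+1)\bigl(\alpha n + (k+1)t\bigr).
\]
Rearranging yields
\[
t \;\geq\; \frac{(1 - (k+1)\alpha)\,n}{1 + (k+1)^2} \;\geq\; \frac{(1 - 2\alpha k)\,n}{8k^2},
\]
using $(k+1)\alpha \leq 2k\alpha$ and $1+(k+1)^2 \leq 8k^2$ for $k\geq 1$. Setting $S := V(G)\setminus V(G_t)$ then gives the desired subgraph.

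\textbf{Main obstacle.} The counting above relies on the process halting because $W_t$ is empty, but the greedy rule could stop earlier if $W_t$ is nonempty while containing no vertex of degree exactly $k+1$ (so every vertex of $W_t$ has degree at least $k+2$ in $G_t$). To close the argument I would extend the process by also allowing deletions of safe vertices of higher degree; this causes $|D_{i+1}|-|D_i|$ to potentially exceed $k+1$, which must then be controlled. The natural tool is the global slack budget $\sum_v (\deg_G(v) - k) = (k-2)n + O_k(1)$: each deletion of a vertex of degree $k+j$ consumes roughly $k+2j$ units of slack, so the number of ``expensive'' deletions is bounded. Fitting this extended analysis into a single potential-function argument that still yields the clean constant $8k^2$ is the delicate step of the proof.
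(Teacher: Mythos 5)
Your construction of $D$, $W$, and the greedy deletion step is sound, and the closing arithmetic from $n-t\leq (k+1)|D_t|$ does yield the constant $8k^2$. But the proof is not complete, and you say so yourself: the counting only applies if the process halts because $W_t=\emptyset$, whereas the rule ``delete $v_i\in W_i$ with $\deg_{G_i}(v_i)=k+1$'' can stall immediately. Nothing in the hypotheses prevents, say, $V_{k+1}(G)=\emptyset$ at the outset: with $\alpha n$ vertices of degree $k$ and all others of degree at least $k+2$, the degree sum is at most $2e_G=2t_k(n)+2=(2k-2)n+O_k(1)$ whenever $k\geq 4$, so this situation is consistent with all the hypotheses. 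Since the entire conclusion rests on reaching $W_t=\emptyset$, the stalling case is the heart of the lemma rather than a loose end, and what you offer for it is a plan, not an argument.

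Moreover, the repair you sketch does not work as stated. The slack budget is $\sum_v(\deg_G(v)-k)=2(t_k(n)+1)-kn=(k-2)n+O_k(1)$, and it controls the degree-weighted number of deletions; but the quantity that must stay below roughly $n/(k+1)$ is $|D_t|\leq \alpha n+\sum_i m_i$, where $m_i$ is the number of degree-$(k+1)$ neighbours of $v_i$ in $G_i$. Charging $m_i\leq\deg_{G_i}(v_i)$ against the slack gives only $\sum_i m_i\leq kt+(k-2)n/2+O_k(1)$, and then $n-t\leq(k+1)\bigl(\alpha n+kt+(k-2)n/2\bigr)$ is vacuous for every $k\geq 3$. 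A repair that does work is different in spirit: drop the requirement $\deg(v_i)=k+1$ and instead choose $v_i\in W_i$ minimizing $m_i$; since $\sum_{v\in W_i}|\Gamma_{G_i}(v)\cap V_{k+1}(G_i)|\leq (k+1)|V_{k+1}(G_i)|\leq (k+1)(n-i)$, averaging bounds $m_i$ by $O(k)$ as long as $|W_i|$ remains a constant fraction of $n-i$ --- but that lower bound on $|W_i|$ requires the very bound on $|D_i|$ you are in the middle of proving, and breaking this circularity (e.g.\ by fixing the number of steps in advance) is exactly the part you have left out. Note also that the paper itself does not prove this statement; it quotes it as Lemma 4 of Erd\H{o}s, Faudree, Rousseau and Schelp, so the comparison is with their original argument rather than anything in this text.
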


Set $\alpha \coloneqq 1/(2k+2)$. The exact value of $\alpha$ is not too
important, but we need $\alpha < 1/(2k)$ and the given value seems
convenient. If $G$ contains fewer than $\alpha n$ vertices of degree $k$
then, by \autoref{lemma:fewDegreeK}, $G$ contains a subgraph of order at most
\[ n-(1-2\alpha k)n/(8k^2) = n - \frac{n}{8k^2(k+1)}
\leq n - \frac{n}{4(k+1)^5\log_2 n}\]
with minimum degree $k$, and we are done. So from now on assume that $G$
contains at least $\alpha n$ vertices of degree $k$.
The following notion is important for the rest of the proof:

\begin{definition}[Good set]\label{def:good}
  By a \emph{good set}, we mean any set of vertices of $G$ constructed
  according to the following rules:
  \begin{enumerate}
    \item If $v$ has degree $k$, then $\{v\}$ is good.
    \item If $A$ is good and $v\notin A$ is such that all but
      at most $k-1$ neighbors of $v$ belong to $A$, then $A\cup \{v\}$ is good.
    \item If $A$ and $B$ are both good and if $G$ contains
      an edge that meets both $A$ and $B$, then $A\cup B$ is good
      (this is the case if $A\cap B\neq \emptyset$ or if $E(A,B)\neq
      \emptyset$).
  \end{enumerate}
  We say that a good set is \emph{maximal} if it is not properly contained in
  another good set. 
\end{definition}

The relevance of this notion for our problem is partly due to the
following claim.
\begin{claim}\label{claim:good}
  The following statements hold:
  \begin{enumerate}[(i)]
    \item Every good set $C$ intersects at most $(k-1)|C|+1$ edges of $G$.
    \item If $C$ is a good set with $|C|< n-k-1$, then $G-C$ contains a subgraph
      of minimum degree at least $k$.
    \item If $C$ and $C'$ are maximal good sets and $C\neq C'$, then
      $C\cap C'=\emptyset$ and $E_G(C,C') = \emptyset$.
  \end{enumerate}
\end{claim}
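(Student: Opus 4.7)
The plan is to prove the three parts by induction on the construction of good sets, tackling them in the reverse order (iii), (ii), (i).

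Part (iii) is immediate from Rule 3: if distinct maximal good sets $C$ and $C'$ satisfied $C \cap C' \neq \emptyset$ or $E_G(C, C') \neq \emptyset$, then Rule 3 would certify $C \cup C'$ as good, and by maximality neither of $C$, $C'$ is contained in the other, so $C \cup C'$ strictly contains $C$, contradicting the maximality of $C$.

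Part (ii) follows from part (i) by a simple counting argument. If $|C| < n - k - 1$, then $G - C$ has $n - |C| \geq k + 2$ vertices and at least $(t_k(n) + 1) - ((k-1)|C|+1) = t_k(n) - (k-1)|C|$ edges. A direct expansion shows $t_k(n) - (k-1)|C| = t_k(n - |C|)$, so the classical extremal result stated in the introduction supplies a subgraph of $G - C$ of minimum degree at least $k$.

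Part (i) is the main technical work. I proceed by induction on the number of operations in the construction. The Rule 1 base case and the Rule 2 step follow directly from the definitions (Rule 1 gives exactly $k = (k-1)\cdot 1 + 1$ incident edges; Rule 2 adds one vertex and at most $k-1$ new incident edges). For Rule 3 with $A \cap B = \emptyset$ I will apply inclusion-exclusion to the sets of edges incident to $A$ and to $B$, using $|E_G(A, B)| \geq 1$ (supplied by the Rule 3 hypothesis) to combine the two inductive bounds $(k-1)|A|+1$ and $(k-1)|B|+1$ into the target $(k-1)|A \cup B| + 1$.

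The main obstacle is Rule 3 with $A \cap B \neq \emptyset$, and the plan is to reduce it to the cases already handled. I will argue that $A \cup B$ admits an alternative construction obtained from $A$ by adding the vertices of $B \setminus A$ one at a time via Rule 2. For this, I will prove by induction on $|B \setminus A|$ the auxiliary sub-claim: there is an ordering $u_1, \ldots, u_r$ of $B \setminus A$ such that each $u_i$ has at most $k-1$ neighbors outside $A \cup \{u_1, \ldots, u_{i-1}\}$. The ordering is guided by $B$'s own construction: I inspect its last operation and either peel off a Rule 2 addition in $B \setminus A$ whose pre-set already lies in the current reference $A \cup \{u_1,\ldots,u_{i-1}\}$, or recurse into a smaller sub-construction where $A \cap B$ is still hit. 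The delicate point will be Rule 1 seeds of $B$ that lie in $B \setminus A$: for any such seed $b$ of degree exactly $k$, its subsequent merger into the rest of $B$ through some Rule 3 step forces an edge from $b$ to another vertex of $B$, and by ordering the peeling so that this adjacent vertex is brought into the reference first, the seed $b$ can itself be added via Rule 2 at the appropriate moment.
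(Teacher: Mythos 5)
Parts (ii) and (iii) of your proposal are correct and match the paper, as do the Rule 1 base case, the Rule 2 step, and the disjoint Rule 3 case of part (i). Since the paper dismisses (i) with a one-line appeal to ``induction on the rules,'' your identification of the overlapping Rule 3 case as the one place requiring genuine work is a fair and useful observation.

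However, your resolution of that case has a real gap: the auxiliary sub-claim is false. Take $k=2$ and consider vertices $a,c,d,b,b',b''$ with edges $ac$, $cd$, $db'$, $db''$, $b'b$, $b''b$, embedded in a larger graph so that $a$ has one further neighbour and every vertex has degree at least $2$; thus $\deg(b)=\deg(b')=\deg(b'')=\deg(c)=\deg(a)=2$ and $\deg(d)=3$. Then $B=\{b,b',b'',d,c\}$ is good (start from the seed $\{b\}$ and add $b',b'',d,c$ successively by Rule 2), and $A=\{a,c\}$ is good (two seeds merged along the edge $ac$), with $A\cap B=\{c\}\neq\emptyset$. But no vertex of $B\setminus A=\{b,b',b'',d\}$ has at most $k-1=1$ neighbours outside $A$: each of $b,b',b''$ has both of its neighbours inside $B\setminus A$, and $d$ has the two neighbours $b',b''$ outside $A$. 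So there is no admissible first vertex $u_1$, and $A\cup B$ cannot be reached from $A$ by Rule 2 additions alone. The repair is to allow disjoint Rule 3 merges as well as Rule 2 steps when replaying $B$ on top of $A$. Concretely, prove by induction on the length of a derivation of $B$: if $A$ satisfies the edge bound, $B$ is good, and some edge of $G$ meets both $A$ and $B$, then $A\cup B$ satisfies the edge bound. If $B$'s last step is a Rule 2 addition of $v$ to $B'$, absorb $B'$ first (by the induction hypothesis, or by a disjoint merge if no edge meets both $A$ and $B'$) and then add $v$; if $B$'s last step is $B=B_1\cup B_2$, absorb first the $B_i$ met by the witnessing edge and then the other, using the edge joining $B_1$ to $B_2$. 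The only quantitative estimates needed are then exactly the Rule 2 and disjoint Rule 3 computations you already have, together with the observation (using the standing assumption $\delta(G)\geq k$) that a vertex of a good set always has a neighbour inside it once the set has size at least two.
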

\begin{proof}
  The statement (i) is easily proved by induction on the rules
  1, 2, and 3 in the definition of a good set. 
  For (ii), suppose
  that $|C|\leq n-k-1$, so $v_{G-C} = n- |C| \geq k+1$.
  Because $C$ intersects at most $(k-1)\abs{C}+1$ edges, we have
  \[ e_{G-C} \geq e_G-(k-1)\abs{C}-1 \geq t_k(n)- (k-1)\abs{C} = t_k(v_{G-C}), \]
  where the last equality follows from the definition $t_k(n) =
  (k-1)(n-k+2)+\binom{k-2}{2}$. Then (ii) follows because every
  graph $G'$ with $v_{G'}\geq k+1$ and $e_{G'}\geq t_k(v_{G'})$ contains a
  subgraph of minimum of minimum degree $k$.
  Statement (iii) follows immediately from the definition of a good set (rule 3).
\end{proof}

We now handle the case where some good set is very large. Since every good set
is obtained by application of one of the rules given in
Definition~\ref{def:good}, it is clear that if $C$ is a good set of size at
least two, then there is a good subset $C'\subseteq C$ of size $|C|/2\leq
|C'|\leq |C|-1$. In particular, if some good set $C$ has size at least
$n/(k+1)$, then there also exists a good set $C'\subseteq C$ satisfying
$n/(2k+2) \leq |C'|\leq n/(k+1)\leq n-k-1$, where the last inequality holds
since $n> k+1$. Then Claim~\ref{claim:good} (ii) implies that $G-C'$ has a
subgraph of minimum degree $k$, and so $G$ has a subgraph of order at most
$n-n/(2k+2) \leq n - n/(4(k+1)^5\log_2 n)$ with minimum degree $k$, and we are
done. From now on, we may thus assume that every good set has size at most
$n/(k+1)\leq n-k-1$.

To motivate the rest of the proof, we now briefly discuss the proof strategy
that was used by Erd\H{o}s, Faudree, Rousseau, and Schelp
in~\cite{ERDOS199053}. Let $x$ be some parameter with $0 < x < n$, which we
will optimize later. If there is a good set of size at least $x$, then
Claim~\ref{claim:good} (ii) implies that we can find a subgraph of minimum
degree $k$ on at most $n-x$ vertices (the case where the good set is larger
than $n-k-1$ is already excluded). Otherwise, every good set is smaller than
$x$. Now we run an algorithm which
constructs a chain of subgraphs $G = H_0\supseteq H_1\supseteq H_2 \supseteq
\dotsc$ greedily as follows: if $V(H_i)$ contains a maximal good set $C$ such
that $H_i-C$ has minimum degree at least $k$, then we let $H_{i+1}=H_i-C$;
otherwise $H_{i+1}=H_i$. One can show, using the statements in Claim~\ref{claim:good} and
the fact that every maximal good set has size at most $x$, that this
algorithm removes at least $\Omega(n/x)$ good sets, and thus produces a graph
with $n - \Omega(n/x)$ vertices, which has minimum degree at least $k$ by
construction. By setting $x$ to the optimal value $\sqrt{n}$, we thus obtain a
subgraph with minimum degree $k$ of size $n-\Theta(\sqrt{n})$. In our proof, we
avoid the case distinction based on the maximum size of a good set. However, we
also construct a small subgraph of $G$ by removing maximal good sets. More
precisely, we start by choosing a collection $\mathcal C$ of maximal good sets
that covers $\Omega(n/\log n)$ vertices and such that all good sets in
$\mathcal C$ are of comparable sizes; the existence of this collection is
guaranteed by Claim~\ref{claim:collection} further below. Next, we remove a
positive fraction of the sets in $\mathcal C$ from $G$ in such a way that the
remaining graph still contains a subgraph of minimum degree $k$. The main
technical statement that makes this possible is Claim~\ref{claim:cover} below.
Since the sets in $\mathcal C$ all have similar sizes, this means that we
remove $\Omega(n/\log n)$ vertices, completing the proof. We now turn to the
details.

\begin{claim}\label{claim:collection}
  There exists a collection $\mathcal C$ of maximal good sets such that
  \[ n > \sum_{C\in \mathcal C} \abs{C} \geq \frac{\alpha n}{\log_2 n}\]
  and such that for any two $C,C'\in \mathcal C$, we have
  $|C|/2 \leq |C'| \leq 2 |C|$.
\end{claim}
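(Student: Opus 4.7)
My plan is a dyadic pigeonhole argument on the sizes of maximal good sets.

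First I would argue that the maximal good sets of $G$ cover many vertices in total. Every vertex $v$ with $\deg(v)=k$ is good as a singleton by rule~1, so (by finiteness) is contained in at least one maximal good set. By Claim~\ref{claim:good}(iii), distinct maximal good sets are pairwise disjoint, so in fact $v$ is contained in exactly one maximal good set. Since we are in the case where at least $\alpha n$ vertices of $G$ have degree exactly $k$, the family $\mathcal{M}$ of all maximal good sets is a pairwise disjoint collection in $V(G)$ satisfying
\[
\alpha n \;\leq\; \sum_{C \in \mathcal{M}} \abs{C} \;\leq\; n.
\]

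Second, by the standing assumption just established in the paragraph preceding the claim, every good set has size at most $n/(k+1)\leq n/3$ (using $k\geq 2$). I would therefore partition $\mathcal{M}$ into dyadic buckets $\mathcal{M}_j = \{C\in \mathcal{M} : 2^j \leq \abs{C} < 2^{j+1}\}$ for $j=0,1,\ldots,\lfloor \log_2(n/3)\rfloor$. This gives at most $\log_2 n$ nonempty buckets, and within any one bucket any two sets have sizes within a factor of $2$, giving the ratio condition $\abs{C}/2 \leq \abs{C'} \leq 2\abs{C}$ for free.

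Third, by pigeonhole, some bucket $\mathcal{C} := \mathcal{M}_j$ has $\sum_{C\in \mathcal{C}}\abs{C} \geq \alpha n / \log_2 n$. The only remaining issue is promoting the upper bound from $\leq n$ to the strict $< n$ required by the claim. If the bucket total happens to equal $n$, then $\mathcal{C}=\mathcal{M}$ and, since each set in $\mathcal{C}$ has size at most $n/3$, we have $\abs{\mathcal{C}}\geq 3$; discarding any one set from $\mathcal{C}$ then lowers the total to at most $n-1<n$ while keeping it above $2n/3 \geq \alpha n/\log_2 n$. The ratio property is preserved under this deletion.

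There is no real obstacle here: the whole argument is pigeonhole plus the two structural facts that maximal good sets are disjoint (Claim~\ref{claim:good}(iii)) and cover every degree-$k$ vertex. The dyadic bucketing is exactly what replaces the $\sqrt{n}$ in Theorem~\ref{thm:sqrtn} by $n/\log n$ — the genuinely hard work will presumably have to be done in Claim~\ref{claim:cover}, which must allow one to delete a constant fraction of the sets in $\mathcal{C}$ without destroying minimum degree $k$.
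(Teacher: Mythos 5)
Your proof is correct and follows essentially the same route as the paper: lower-bound the total size of the (pairwise disjoint) maximal good sets by $\alpha n$ via the degree-$k$ vertices, apply dyadic pigeonhole over at most $\log_2 n$ buckets, and discard one set if needed to make the upper bound strict, using the standing bound $\abs{C}\leq n/(k+1)$. The only differences (half-open versus closed dyadic intervals, and $n/3$ in place of $n/(k+1)$ in the discarding step) are cosmetic.
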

\begin{proof}
  Let $\mathcal C$ denote a maximum-size collection of maximal good sets. Since
  $G$ contains at least $\alpha n$ vertices of degree $k$, and since by
  maximality of $\abs{\mathcal C}$, every such vertex is contained in one of
  the good sets in $\mathcal C$, we have $\sum_{C\in\mathcal C} \abs{C} \geq
  \alpha n$. For every $1\leq i \leq \log_2 n$, let $\mathcal C_i\subseteq
  \mathcal C$ denote the subfamily of all $C\in \mathcal C$ with $2^{i-1}\leq
  \abs C \leq 2^{i}$. By the pigeonhole principle, there exists $i$ such that
  $\sum_{C\in\mathcal C_i} \abs{C} \geq \alpha n/(\log_2 n)$.
  If $\sum_{C\in\mathcal C_i} \abs{C} < n$ then
  $\mathcal C_i$ is a collection with the desired properties.
  Otherwise we remove a single good set, say $C^*$, from $\mathcal C_i$. Since
  each good set has size at most $n/(k+1)$, we have
  \[ \sum_{C\in \mathcal C_i\setminus \{C^*\}} \abs{C} \geq n-n/(k+1) \geq
  \alpha n/(\log_2 n),\]
  where we used $\alpha = 1/(2k+2) \leq (1-1/(k+1))$.
  Then $\mathcal C_i\setminus \{C^*\}$ is a collection with the desired properties.
\end{proof}

Let $\mathcal C$ be a collection as in the statement of
Claim~\ref{claim:collection}. For every $v\in V(G)$, we define $\mathcal
C'(v)\subseteq \mathcal C$ to be the set of all good sets $C\in \mathcal C$
such that $C$ contains a neighbor of $v$ in $G$. Moreover, we let $\mathcal
C(v)\subseteq \mathcal C'(v)$ be any subcollection of size $k+1$ if
$\abs{\mathcal C(v)}> k+1$ and we let $\mathcal C(v) = \mathcal C'(v)$
otherwise.

\begin{claim}\label{claim:cover}
  There is a set $S\subseteq V(G)$ of size at most
  $2|\mathcal C|+k^2$ with the following property:
  for every subfamily $\mathcal C' \subseteq \mathcal C$ 
  such that for all $s\in S$, we have $|\mathcal C' \cap \mathcal C(s)|\leq 1$,
  the graph $G-\bigcup_{C\in
  \mathcal C'}C$ contains a subgraph of minimum degree $k$.
\end{claim}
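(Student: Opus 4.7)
The plan starts from two structural consequences of Claim~\ref{claim:good}(iii). Let $U = V(G)\setminus\bigcup_{C\in\mathcal{C}}C$. First, if $v\in C_v$ for some $C_v\in\mathcal{C}$, then all of $v$'s neighbors outside $C_v$ lie in $U$, so $\mathcal{C}'(v)\subseteq\{C_v\}$ and the condition $|\mathcal{C}'\cap\mathcal{C}(v)|\leq 1$ is trivially satisfied; hence I may restrict $S\subseteq U$. Second, for any $\mathcal{C}'\subseteq\mathcal{C}$, every vertex of $\bigcup_{C\in\mathcal{C}\setminus\mathcal{C}'}C$ retains all its neighbors in $G':=G-\bigcup_{C\in\mathcal{C}'}C$, so such vertices automatically have degree $\geq k$ in $G'$. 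Thus only vertices of $U$ can fail the minimum-degree-$k$ condition.

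The crucial ingredient is the following safety lemma: for any $u\in U$ with $|\mathcal{C}'\cap\mathcal{C}(u)|\leq 1$, we have $\deg_{G'}(u)\geq k$. I would prove it by case distinction on $|\mathcal{C}'(u)|$. If $|\mathcal{C}'(u)|\leq k+1$, then $\mathcal{C}(u)=\mathcal{C}'(u)$, so the hypothesis means at most one set $C^*\in\mathcal{C}'(u)$ is in $\mathcal{C}'$; by maximality of $C^*$, rule 2 of Definition~\ref{def:good} must fail for the pair $(C^*,u)$, giving $|\Gamma(u)\cap C^*|\leq\deg_G(u)-k$ (otherwise $C^*\cup\{u\}$ would be good, contradicting maximality). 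Hence $u$ loses at most $\deg_G(u)-k$ neighbors and $\deg_{G'}(u)\geq k$. If $|\mathcal{C}'(u)|>k+1$, then $|\mathcal{C}(u)|=k+1$ and the hypothesis leaves at least $k$ members of $\mathcal{C}(u)$ outside $\mathcal{C}'$; each contains at least one neighbor of $u$, so again $\deg_{G'}(u)\geq k$.

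With the safety lemma available, the remaining task is to construct $S\subseteq U$ with $|S|\leq 2|\mathcal{C}|+k^2$ such that every compatible $\mathcal{C}'$ yields a $G'$ containing a min-degree-$k$ subgraph. I would build $S$ by a discharging argument: for each $C\in\mathcal{C}$ allocate a quota of two ``guard'' vertices from $U$ (accounting for the $2|\mathcal{C}|$ term), plus a slack of $k^2$ vertices for residual boundary cases. The guards should be chosen so that any would-be vulnerable $u\in U\setminus S$ forces at least one guard's $S$-constraint to be violated for every $\mathcal{C}'$ that would push $\deg_{G'}(u)<k$.

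The hardest step, and where I expect most of the technical work, is verifying that the resulting $S$ works. One must argue, for every compatible $\mathcal{C}'$, either that the few $u\in U\setminus S$ with $\deg_{G'}(u)<k$ can be peeled off without destroying a min-degree-$k$ kernel, or that no such $u$ exists at all. This would rely on Claim~\ref{claim:good}(i) to bound the total edges between each $C\in\mathcal{C}$ and $U$, combined with the rule-2 inequality $|\Gamma(u)\cap C|\leq\deg_G(u)-k$ derived above, in order to show that the cascading during shelling cannot carry into $\bigcup_{C\in\mathcal{C}\setminus\mathcal{C}'}C$ in a destructive way. The $k^2$ slack in $|S|$ should absorb the finitely many extremal configurations where the per-good-set quota of two guards is borderline insufficient.
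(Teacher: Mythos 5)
Your preliminary observations are correct and match the paper's final verification step: by Claim~\ref{claim:good}(iii) only vertices of $U=V(G)\setminus\bigcup_{C\in\mathcal C}C$ can lose degree when good sets are removed, and your ``safety lemma'' (via the failure of rule 2 for a maximal good set, giving $|\Gamma(u)\cap C^*|\leq \deg_G(u)-k$, respectively via the $k$ surviving members of $\mathcal C(u)$ when $|\mathcal C'(u)|>k+1$) is exactly how the paper shows that every $s\in S$ keeps degree at least $k$ in $G-\bigcup_{C\in\mathcal C'}C$. But this part of the argument is the easy direction, and it only controls the vertices that you put \emph{into} $S$.

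The genuine gap is the construction of $S$ itself and, more importantly, the reason why $G':=G-\bigcup_{C\in\mathcal C'}C$ contains a subgraph of minimum degree $k$ once the low-degree vertices of $G'$ avoid $S$. Your ``discharging with two guards per good set plus $k^2$ slack'' names a budget but supplies no mechanism: you explicitly defer the question of why the vertices of $U\setminus S$ that drop below degree $k$ ``can be peeled off without destroying a min-degree-$k$ kernel,'' and that question \emph{is} the claim. Note that $H:=G-\bigcup_{C\in\mathcal C}C$ may itself contain no subgraph of minimum degree $k$, so no purely local argument about individual vertices can work; something quantitative must certify that the peeling process applied to $G'$ terminates before exhausting the graph. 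The paper's solution is Lemma~\ref{lemma:cover}: using Claim~\ref{claim:good}(i) one gets $e_H\geq t_k(v_H)-|\mathcal C|+1$, i.e.\ $H$ is only about $|\mathcal C|$ edges short of forcing a min-degree-$k$ subgraph, and an induction on $v_H$ with the potential $\phi(H)=2(k-1)v_H-2e_H-\sum_{w\in V_{\leq k-1}(H)}(k-1-\deg_H(w))$ produces a set $S\subseteq V_{\leq k-1}(H)$ of size at most $2(k-1)v_H-2e_H\leq 2|\mathcal C|+k^2$ such that \emph{every} supergraph of $H$ whose low-degree vertices lie in $V(H)\setminus S$ has a min-degree-$k$ subgraph. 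Your safety lemma then shows $G'$ is such a supergraph. Without this lemma (or an equivalent global certificate tying $|S|$ to the edge deficit of $H$), the proposal does not close.
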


We postpone the proof of Claim~\ref{claim:cover} to the end of the proof.
For now, assume that we have a set $S$ as in the claim.
Our goal is to find a subcollection $\mathcal C'\subseteq \mathcal C$
of size $\Omega(\abs{\mathcal C})$ containing at most one good set
from every $\mathcal C(s)$.
To find such a collection $\mathcal C'$, we construct an auxiliary `conflict
graph' $A$ on the vertex set $\mathcal C$ by adding a clique on $\mathcal
C(s)$ for every $s\in S$. Note that we are looking for an independent set
in $A$. Because $\abs{\mathcal C(s)}\leq k+1$ holds by construction and
since $|S|\leq 2\abs{\mathcal C}+k^2$, we have
\[ e_A \leq \abs S \binom{k+1}{2} \leq \frac{2(\abs{\mathcal C}+k^2)
(k+1)^2}{2} \leq \frac{\abs{\mathcal C} (2+k^2)(k+1)^2}{2} \leq
\frac{\abs{\mathcal C}((k+1)^4-1)}{2}. \]
By Turán's theorem, any graph on $n$ vertices with at most $cn$ edges contains
an independent set of size at least $n/(2c+1)$. Thus, $A$ contains an
independent set $\mathcal{C'}\subseteq \mathcal C$ of size at least
$\abs{\mathcal C}/(k+1)^4$.
Because any two $C,C'\in \mathcal C$ satisfy $|C|/2\leq |C'|\leq 2|C|$ and
since $\sum_{C\in\mathcal C} \abs{C} \ge \alpha n/(\log_2 n)$, we have
\[ \sum_{C\in \mathcal C'} \abs{C}\geq \frac{\alpha n}{2(k+1)^4\log_2 n}. \]
Then $G'\coloneqq G-\bigcup_{C\in \mathcal C'} C$ has at most
$n-\alpha n/(2(k+1)^4 \log_2 n)$ vertices and contains a subgraph of minimum
degree $k$, by the defining property of $S$. Recalling that $\alpha =
1/(2k+2)$, this completes the proof of the theorem.
It remains to prove Claim~\ref{claim:cover}.

\subsection{Proof of Claim~\ref{claim:cover}}

For the proof of the claim, we need the following definition and lemma.

\begin{definition}[$(H,S,k)$-cover]
  Suppose that $H$ is a graph and that $S\subseteq V(H)$ is a subset of its
  vertices. Given $k\geq 2$, a graph $\tilde H$ is called an \emph{$(H,
  S,k)$-cover} if it contains $H$ as a subgraph and if 
  $V_{\leq k-1}(\tilde H)\subseteq V(H)\setminus S$.
\end{definition}

\begin{lemma}\label{lemma:cover}
  Suppose that $H$ is a graph that does not contain a subgraph of minimum
  degree $k$, for $k\geq 2$. Then there exists a subset $S \subseteq V_{\leq
  k-1}(H)$ of cardinality at most $2(k-1)v_H-2e_H$ such that every
  $(H,S,k)$-cover contains a subgraph of minimum degree $k$.
\end{lemma}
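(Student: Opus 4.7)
The plan is to prove the lemma by induction on $v_H$, peeling off one vertex at a time. The base cases $v_H \leq 1$ are direct: for $v_H = 1$ we take $S = V(H)$, since any cover $\tilde H$ then has $V_{\leq k-1}(\tilde H) \subseteq V(H)\setminus S = \emptyset$ and is itself of minimum degree $k$. For the inductive step, since $H$ has no subgraph of minimum degree $k$, there exists a vertex $v \in V(H)$ with $d \coloneqq \deg_H(v) \leq k-1$. I would apply the induction hypothesis to $H' \coloneqq H - v$, obtaining a set $S' \subseteq V_{\leq k-1}(H')$ with
\[
  |S'| \leq 2(k-1)v_{H'} - 2e_{H'} = \bigl(2(k-1)v_H - 2e_H\bigr) - 2(k-1-d),
\]
so that the removal of $v$ gives a ``budget'' of $2(k-1-d)$ for augmenting $S'$.

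The natural construction is $S \coloneqq (S' \setminus T) \cup \{v\}$, where $T \coloneqq \{u \in \Gamma_H(v) : \deg_H(u) = k\}$ is the set of neighbors whose membership in $V_{\leq k-1}$ toggles when $v$ is added back to $H'$. Excluding $T$ is forced by $S \subseteq V_{\leq k-1}(H)$, while adding $v$ ensures $\deg_{\tilde H}(v) \geq k$ in any $(H,S,k)$-cover $\tilde H$. The key observation for the cover analysis is that every $u \in T$ automatically satisfies $\deg_{\tilde H}(u) \geq \deg_H(u) = k$, so $V_{\leq k-1}(\tilde H) \cap T = \emptyset$. Combined with $V_{\leq k-1}(\tilde H) \cap S = \emptyset$ and $v \notin V_{\leq k-1}(\tilde H)$, this shows $\tilde H$ is also an $(H', S', k)$-cover, and the induction hypothesis produces the desired subgraph. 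When $d \leq k - 2$ the budget of at least $2$ comfortably absorbs the $+1$ from adding $v$. When $d = k-1$ the budget vanishes; here I would instead try to find a neighbor $u \in V_{k-1}(H)$ of $v$ and remove both vertices, since $\phi(H - v - u) = \phi(H) - 2$ (the edge $vu$ is double-counted), giving exactly enough room to add $\{v,u\}$ to the inductive set.

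The main obstacle is the remaining subcase $d = k-1$ with every neighbor of $v$ having $\deg_H \geq k$, where no $V_{k-1}$-partner of $v$ exists and the budget is truly zero. Here I would set $S \coloneqq S' \setminus T$ (dropping $v$), which gives $|S| \leq |S'| \leq 2(k-1)v_H - 2e_H$. The cover analysis splits on $\deg_{\tilde H}(v)$: if $\deg_{\tilde H}(v) \geq k$, the earlier reduction to an $(H', S', k)$-cover goes through; if $\deg_{\tilde H}(v) = k-1$, then $\Gamma_{\tilde H}(v) = \Gamma_H(v)$, and the plan is to argue instead that $\tilde H - v$ is an $(H', S', k)$-cover, using that each $u \in \Gamma_H(v)$ has $\deg_H(u) \geq k$ and hence $\deg_{\tilde H - v}(u) \geq k - 1$. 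The delicate point is that a neighbor $u \in \Gamma_H(v)$ with $\deg_H(u) = k$ may have $\deg_{\tilde H - v}(u) = k-1$, which would violate the cover containment precisely when such a $u$ lies in $S'$. I expect this to be the crux of the proof: resolving it will likely require either strengthening the induction hypothesis so that $S'$ can be chosen to avoid a prescribed subset of $V_{k-1}(H')$, or introducing an alternative two-vertex reduction tailored to this configuration.
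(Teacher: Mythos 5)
Your overall strategy (induction on $v_H$, peeling a vertex $v$ of degree $d\leq k-1$ and tracking the budget $2(k-1-d)$) matches the paper's, and your accounting for $d\leq k-2$ and for the two-vertex reduction is sound. But the subcase you single out as the crux --- $d=k-1$, every neighbor of $v$ of degree at least $k$ in $H$, and some neighbor $u$ with $\deg_H(u)=k$ lying in $S'$ --- is a genuine gap, not a technicality: with $S=S'\setminus T$, a cover $\tilde H$ with $\deg_{\tilde H}(v)=k-1$ can have $\deg_{\tilde H-v}(u)=k-1$ while $u\in S'$, so neither $\tilde H$ nor $\tilde H-v$ is an $(H',S',k)$-cover and the induction stalls, exactly as you fear.

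The paper's resolution is neither of the two fixes you propose. The decision whether to put $v$ into $S$ is made \emph{adaptively, depending on $S'$}: the paper sets $S=(S'\cup I_v)\setminus V_k(H)$, where $I_v=\{v\}$ if and only if $\deg_H(v)\leq k-2$ or $\Gamma_H(v)\cap S'\neq\emptyset$. In your problematic configuration $\Gamma_H(v)\cap S'\neq\emptyset$, so $v$ \emph{is} put into $S$; this forces $\deg_{\tilde H}(v)\geq k$ in every $(H,S,k)$-cover and eliminates the bad case outright. The cost of adding $v$ is paid by a swap: any $u\in\Gamma_H(v)\cap S'$ has $\deg_{H'}(u)\leq k-1$ and (by assumption) $\deg_H(u)\geq k$, hence $\deg_H(u)=k$ exactly, so $u$ is ejected from $S$ by the ``$\setminus V_k(H)$'' clause --- as it must be, since $S\subseteq V_{\leq k-1}(H)$ --- and therefore $|S|\leq |S'|$, so the zero budget suffices. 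Conversely, if $\Gamma_H(v)\cap S'=\emptyset$, then excluding $v$ is safe: the vertices whose degree drops in passing from $\tilde H$ to $\tilde H-v$ are exactly the neighbors of $v$, and all of them lie outside $S'$, so $V_{\leq k-1}(\tilde H-v)\subseteq V(H')\setminus S'$ holds even if some of them fall to degree $k-1$. (A secondary difference: where you invoke a two-vertex reduction for $d=k-1$ with a low-degree neighbor, the paper instead strengthens the induction to the potential $\phi(H)=2(k-1)v_H-2e_H-\sum_{w\in V_{\leq k-1}(H)}(k-1-\deg_H(w))$, which supplies the extra unit of budget in that case; either route works, but the adaptive definition of $I_v$ is the idea you are missing.)
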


\begin{proof}
  For a graph $H$, we define the function
  \begin{equation} \label{eq:defphi}
  \phi(H)\coloneqq 2(k-1)v_H-2e_H-\sum_{w \in V_{\le k - 1}(H)} (k - 1 - \deg_H(w)).  
  \end{equation}
  By induction on the number of vertices of $H$, we will prove the following
  statement, which is slightly stronger than the claim of the lemma: if $H$ has
  no subgraph of minimum degree $k$, then there is a subset $S\subseteq V_{\leq
  k-1}(H)$ of size at most $\phi(H)$ such that every $(H,S,k)$-cover contains a
  subgraph of minimum degree $k$.

  In the base case $v_H=1$, we can let $S$ be the set containing the single
  vertex of $H$. In this case, we have $|S| \leq k-1=\phi(H)$, and every
  $(H,S,k)$-cover has minimum degree at least $k$ by definition.

  If $v_H \geq 2$ then the fact that $H$ does not contain a subgraph of minimum
  degree $k$ implies that there is a vertex $v$ with $\deg(v) \leq k-1$. Let
  us write $H'\coloneqq H-v$. Then $H'$ is a graph without a subgraph of
  minimum degree $k$, and it has fewer vertices than $H$. Hence, by induction,
  it contains a set $S'\subseteq V_{\leq k-1}(H')$ such that every
  $(H',S',k)$-cover has a subgraph of minimum degree $k$.
  We now define the set $S$ by
  \begin{equation*}
    S\coloneqq(S'\cup I_v) \setminus V_k(H),
  \end{equation*}
  where
  \begin{equation*}
    I_v =
    \begin{cases}
      \{v\} & \text{if $\deg(v)\leq k-2$ or $\Gamma(v)\cap S' \neq \emptyset$,} \\
      \emptyset & \text{otherwise.}
    \end{cases}
  \end{equation*}

  Note that from the definition of $S$ it follows that if $v \notin S$ then we
  have $S = S'$. Furthermore,  since $S'\subseteq V_{\leq k-1}(H')$ and by
  definition $S$ does not contain vertices of degree $k$, we also have $S
  \subseteq
  V_{\leq k-1}(H)$. 
  To check that $S$ is not too big, note that 
  \begin{align*}
  \phi(H)-\phi(H') & = 2 (k - 1) - 2 \deg_H(v) -
   \big( k - 1 - \deg_H(v)  - |\Gamma(v) \cap V_{\le k - 1}(H)| \big)   \\ 
   & = (k - 1) - \deg_H(v) +| V_{\le k - 1}(H) \cap \Gamma(v)|  \ge 0.
  \end{align*}
  Therefore if $v\notin S$ then $\abs{S}\leq \abs{S'}\leq \phi(H')\leq
  \phi(H)$. If $v\in S$ then we distinguish two cases:
  \begin{enumerate}
    \item If $\deg(v)=k-1$ and $\abs {V_{\leq k-1}(H)\cap \Gamma(v)} =0$ then
      there must exist a vertex $u\in S' \cap \Gamma(v)$ with degree at least
      $k$. By definition of $S'$, $u$ has degree at most $k-1$ in $H'=H-v$ and
      thus $u$ has degree exactly $k$ in $H$. In particular, we have $u\in
      S'\setminus S$ and thus $\abs{S} \leq \abs{S'} \leq \phi(H')\leq
      \phi(H)$.
    \item Otherwise, we either have
      $\deg(v)\leq k-2$ or we have $\deg(v)=k-1$ and $\abs {V_{\leq
      k-1}(H)\cap \Gamma(v)} \geq 1$. In both cases, we have
      $\phi(H)-\phi(H')\geq 1$ and so $\abs{S}\leq \abs{S'}+1 \leq \phi(H')+1
      \leq \phi(H)$.
  \end{enumerate}

  Now suppose that $\tilde H\supseteq H$ is an $(H,S,k)$-cover.
  We claim that then either $\tilde H$ is an $(H',S',k)$-cover or $\tilde H-v$ is
  an $(H',S',k)$-cover. In both cases, the induction hypothesis implies that
  $\tilde H$ has a subgraph of minimum degree $k$.
  To show this, we first recall that by the definition of an
  $(H,S,k)$-cover, we have $V_{\leq k-1}(\tilde H)\subseteq V(H)\setminus S$.
  We distinguish two cases. If $\deg_{\tilde H}(v)\geq k$ then actually
  $V_{\leq k-1}(\tilde H)\subseteq V(H)\setminus (S\cup \{v\}) =
  V(H')\setminus S$. Moreover, by construction of $S$, all vertices of $V(H')$
  that belong to $S'\setminus S$ must have degree $k$ in $H$ (and so degree at
  least $k$ in $\tilde H$). Therefore we have $V_{\leq k-1}(\tilde H)\subseteq
  V(H')\setminus S'$. In other words, $\tilde H$ is $(H',S',k)$-cover.
  Otherwise, we have
  $\deg_{\tilde H}(v)\leq k-1$. Then $v\notin S$ and thus
  $\deg_{H}(v)=k-1=\deg_{\tilde H}(v)$, $\Gamma_H(v)\cap S'=\emptyset$
  and $S'= S$.
  It
  moreover follows that $\Gamma_{\tilde H}(v) = \Gamma_H(v) \subseteq
  V(H')\setminus S'$.
  These  observations show that
  \begin{align*}
  V_{\leq k-1}(\tilde H-v) & \subseteq (V_{\leq k-1}(\tilde H)\setminus \{v\})
  \cup \Gamma_{\tilde H}(v) \\
  & \subseteq
   (V(H)\setminus (S\cup \{v\}))
  \cup \Gamma_{\tilde H}(v)  \\
  & = V(H')\setminus S'.
  \end{align*}
  Thus $\tilde H-v$ is an $(H',S',k)$-cover, completing the proof.
\end{proof}

Using this lemma, we now prove Claim~\ref{claim:cover}.
Consider the graph $H\coloneqq G-\bigcup_{C\in\mathcal C} C$ obtained
by removing all sets in $\mathcal C$ from $G$.
We have $v_H = n - \sum_{C\in \mathcal C} |C|>0$. By Claim~\ref{claim:good} (i),
every good set $C$ intersects at most $(k-1)|C|+1$ edges, and so
\begin{equation}\label{eq:eh}
  e_H \geq t_k(n)+1-\sum_{C\in\mathcal C}\big((k-1)\abs C + 1\big) =
  t_k(v_H)-\abs{\mathcal C} + 1.
\end{equation}
If $H$ contains a subgraph of minimum degree $k$, then we are done because
we can simply choose $S=\emptyset$. Otherwise, we apply \autoref{lemma:cover}
to $H$ to obtain a set $S \subseteq V_{\leq k-1}(H)$ of size
\[
  |S|\leq 2(k-1)v_H-2e_H \leq 2(k-1)(k-2) - 2\binom{k-2}{2} + 2\abs{\mathcal
  C_i} -2 \leq 2\abs{\mathcal C_i} + k^2
  \]
(using~\eqref{eq:eh} and the definition of $t_k(v_H)$ to bound $e_H$)
such that every $(H,S,k)$-cover contains a subgraph of minimum degree $k$.
To complete the proof of the claim, suppose that
$\mathcal C'\subseteq \mathcal C$ contains at most one set from each
set $\mathcal C(s)$ where $s\in S$. It is enough to show that the graph 
$G - \bigcup_{C\in \mathcal C'} C$ is an $(H,S,k)$-cover.
Note first that since each element of $\mathcal C$ is a
maximal good set, the elements of $\mathcal C$ are pairwise disjoint and for
any two distinct $C,C'\in \mathcal C$, there are no edges between $C$ and
$C'$ in $G$. 
Since $G$ has minimum degree at least $k$ by assumption,
this means in particular that 
$V_{\leq k-1}(G-\bigcup_{C\in\mathcal C'} C) \subseteq V(H)$.
Furthermore, every $s\in S$ has degree at least $k$ in $G -
\bigcup_{C\in \mathcal C'} C$, for one of the following two reasons:
\begin{enumerate}
\item Either $\mathcal C' \cap \mathcal C(s) = \emptyset$ or $|\mathcal C(s)
  \setminus \mathcal C'| \ge k$ in which case the claim directly follows.
\item Or $\mathcal C(s) =\mathcal C'(s)$ and
  we have removed exactly one good set from $\mathcal C(s)$, say
  $\mathcal C' \cap \mathcal C(s)=\{\tilde C\}$. Then the degree of $s$ in $G
    - \tilde C$ (which equals the degree in $G -
    \bigcup_{C\in \mathcal C'} C$)
    must be at least $k$ by the maximality of $\tilde C$.
\end{enumerate}
Thus $V_{\leq k-1}(G-\bigcup_{C\in\mathcal C'} C) \subseteq V(H)\setminus S$,
so $G - \bigcup_{C\in \mathcal C'} C$ is an $(H,S,k)$-cover. This
completes the proof of the claim.

\section{Acknowledgements}

We are grateful to Rajko Nenadov and Yury Person for fruitful discussions.

\bibliographystyle{abbrv}
\bibliography{mindeg}

\end{document}